\newcommand{\union}{\mathop{\bigcup}\limits}
\newcommand{\zfc}{\mathnormal{\mathsf{ZFC}}}
\newcommand{\inter}{\mathop{\bigcap}\limits}
\newcommand{\ch}{\mathnormal{\mathsf{CH}}}
\newcommand{\supp}{\mathop{\mathrm{supp}}}
\newcommand{\fd}{\mathop{\mathrm{F}\bigtriangleup}}
\newcommand{\fs}{\mathop{\mathrm{FS}}}
\newtheorem{theorem}{Theorem}[section]
\newtheorem{claim}[theorem]{Claim}
\newtheorem{lemma}[theorem]{Lemma}
\newtheorem{corollary}[theorem]{Corollary}
\theoremstyle{definition}
\newtheorem{definition}[theorem]{Definition}
\title{Every strongly summable ultrafilter on $\bigoplus\mathbb Z_2$ is sparse}
\author[D. Fern\'andez]{David J. Fern\'andez Bret\'on}
\address{Department of Mathematics and Statistics \\ York University \\ 4700 Keele Street \\ Toronto, Ontario, Canada \\ 
M3J 1P3.} 
\email{davidfb@mathstat.yorku.ca}
\urladdr{http://math.yorku.ca/\textasciitilde davidfb/}
\thanks{The author would like to thank the support received from the Consejo Nacional de Ciencia y Tecnolog\'{\i}a 
(CONACyT), Mexico, by means of 
scholarship number 213921/309058.}
\date{\today}
\keywords{ultrafilters, Stone-\v Cech
compactification, sparse ultrafilter, strongly summable ultrafilter, finite sums, Boolean group, abelian group.}
\subjclass[2010]{03E75 (Primary); 54D35, 54D80 (Secondary).}
\begin{document}

\begin{abstract}
We investigate the possibility of the existence of nonsparse strongly summable ultrafilters on certain abelian 
groups. In particular, we show that every strongly summable ultrafilter on the countably infinite Boolean group is 
sparse. This answers a question of Hindman, Stepr\=ans and Strauss.
\end{abstract}

\maketitle
\tableofcontents

\section{Introduction}

The concept of the Stone-\v Cech compactification of a semigroup has become one of central importance, and has 
been studied extensively. Throughout this paper, we think of the Stone-\v Cech compactification of a 
discrete abelian semigroup $G$ as the set $\beta G$ of ultrafilters on $G$, 
where the point $x\in G$ is identified with the principal ultrafilter $\{A\subseteq G\big|x\in A\}$, and the basic 
open sets 
are those of the form $\bar{A}=\{p\in\beta G\big|A\in p\}$, for $A\subseteq G$. Then these sets are actually clopen, 
and 
$\bar{A}$ is really the closure in $\beta G$ of the set $A$, regarded as a subset of $\beta G$ under the 
aforementioned 
identification of points in $G$ with principal ultrafilters. The semigroup operation $+$ on $G$ is also extended by the 
formula
$$p+q=\{A\subseteq G\big|\{x\in G\big|\{y\in G\big|x+y\in A\}\in q\}\in p\}$$
which turns $\beta G$ into a right topological semigroup, meaning that for each $p\in\beta G$ the mapping 
$(\cdot)+p:\beta G\longrightarrow\beta G$ is continuous (note that the extended operation $+$ need not be 
commutative, and, even if $G$ is a group, elements $p\in G^*=\beta G\setminus G$ do not necessarily have inverses). 
The details of this 
construction (as well as a lot more information, along with applications) can be seen in \cite{hindmanstrauss}. In 
this paper, we will focus mainly on the case when $G$ is a group.

The lowercase roman letters $p,q,r$ are reserved for ultrafilters, while the uppercase roman letters $A,B,C,D$, 
with or 
without subscripts, will always denote subsets of the abelian group at hand. We will use the von Neumann natural 
numbers, 
i.e., a 
natural number $n$ is viewed as the set 
$\{0,\ldots,n-1\}$ (with $0$ equal to $\varnothing$, the empty set); and $\omega$ will denote the set of finite ordinals, 
i.e. the set of natural numbers along with zero (thus the symbols $\in$ and $<$ mean the same when applied to natural 
numbers, $0$, and to $\omega$ itself). The lowercase roman letters $i,j,k,l,m,n$, with or without subscript, will 
be reserved to denote elements of $\omega$. The lowercase roman letters $a,b,c$, with or without subscript, will stand for 
elements of $[\omega]^{<\omega}$, i.e. for finite subsets 
of $\omega$. The letters $M$ and $N$, with or without subscripts, will in general be reserved for denoting (finite or 
infinite) subsets of $\omega$. Given a subset $M\subseteq\omega$, $[M]^{<\omega}$ will denote the set of 
finite subsets of $M$, and $[M]^{\omega}$ denotes the set of infinite subsets of $M$. Of the groups that we study 
here, one of the most important ones is the circle group $\mathbb T=\mathbb R/\mathbb Z$. When dealing with 
this group, we will identify its elements (which are cosets modulo $\mathbb Z$) with their unique representative $t$ 
satisfying $0\leq t<1$. Therefore, when we refer to an element of $\mathbb T$ as a real number in $[0,1)$, we really 
mean the coset of that number modulo $\mathbb Z$.

\begin{definition}
If $G$ is an abelian semigroup, we say that an ultrafilter $p\in\beta G$ is \textbf{strongly summable} if it has a 
base of $\fs$-sets, i.e. if for every $A\in p$ 
there exists a sequence $\vec{x}=\langle x_n\big|n<\omega\rangle$ such that $p\ni\fs(\vec{x})\subseteq A$, where
$$\fs(\vec{x})=\left\{\sum_{n\in a}x_n\bigg|a\in[\omega]^{<\omega}\setminus\{\varnothing\}\right\}$$
denotes the \emph{set of finite sums of the sequence $\vec{x}$}.
\end{definition}

Note that if a strongly summable ultrafilter is principal, then it must actually be $0$. Strongly summable ultrafilters on 
$(\omega,+)$ were first constructed, under $\ch$, by Neil Hindman in \cite{hindman}, although at that time 
the terminology was still not in use. Their importance at first came from the fact that they are examples of idempotents in 
$\beta\omega$, but among idempotents they are special in that the largest subgroup of $\omega^*=\beta\omega\setminus\omega$ 
containing one of them as the identity is just a copy of $\mathbb Z$. More concretely, \cite[Th. 12.42]{hindmanstrauss} 
establishes that if $p\in\omega^*$ is a strongly summable ultrafilter, and $q,r\in\omega^*$ are such that $q+r=r+q=p$, 
then $q,r\in\mathbb Z+p$. In \cite{protasov}, the authors generalize some results previously only known to hold for 
ultrafilters on $\beta\omega$ or $\beta\mathbb Z$. In particular, they proved there that every strongly summable ultrafilter 
$p$ on any abelian group $G$ is an idempotent (\cite[Th. 2.3]{protasov}). And \cite[Th. 4.6]{protasov} states that if 
$G$ can 
be embedded 
in $\mathbb T$, then whenever $q,r\in G^*=\beta G\setminus G$ are such that $q+r=r+q=p$, 
it must be the 
case that $q,r\in G+p$. It is possible to get a slightly stronger result if one strengthens the definition of strongly 
summable.

\begin{definition}
An ultrafilter $p\in\beta G$ is \textbf{sparse} if for every $A\in p$ there exist two sequences 
$\vec{x}=\langle x_n\big|n<\omega\rangle$, $\vec{y}=\langle y_n\big|n<\omega\rangle$, where $\vec{y}$ is a subsequence 
of $\vec{x}$ such that $\{x_n\big|n<\omega\}\setminus\{y_n\big|n<\omega\}$ is infinite, $\fs(\vec{x})\subseteq A$, and 
$\fs(\vec{y})\in p$.
\end{definition}

Then obviously every sparse ultrafilter will be nonprincipal and strongly summable. And (\cite[Th. 4.5]{protasov}) 
if $G$ can be 
embedded in 
$\mathbb T$ and $p\in G^*$ is sparse, then whenever $q,r\in G^*$ are such that $q+r=p$, it must be the case that 
$q,r\in G+p$.

In \cite{jurisetal}, the authors investigate the different kinds of abelian semigroups on which 
every nonprincipal 
strongly summable ultrafilter must be sparse. For example, every nonprincipal strongly summable ultrafilter 
$p\in\omega^*$ must actually be sparse (this follows from 
\cite[Th. 3.2]{jurisetal} together with either \cite[Lemmas 12.20, 12.32]{hindmanstrauss} or  
\cite[Lemmas 1A, 1C]{blasshindman}). Thus the above result about $p$ being expressible as a sum only trivially holds 
for all nonprincipal strongly 
summable ultrafilters on $\omega$. More generally, \cite[Th. 4.2]{jurisetal} establishes that if $S$ 
is a 
countable 
subsemigroup of $\mathbb T$, then every nonprincipal strongly summable ultrafilter on $S$ is sparse. After, they 
build on this to prove 
a more general result.

\begin{theorem}[\cite{jurisetal}, Th. 4.5]
Let $S$ be a countable subsemigroup of $\bigoplus_{n<\omega}\mathbb T$ and let $p$ be a nonprincipal strongly summable 
ultrafilter on $S$. If
$$\left\{x\in S\big|\pi_{\min(x)}(x)\neq\frac{1}{2}\right\}\in p,$$
then $p$ must be sparse (here $\min(x)$ denotes the least $i$ such that $\pi_i(x)$ is nonzero).
\end{theorem}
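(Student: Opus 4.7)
The plan is to reduce the problem, for an arbitrary $A\in p$, to producing a sequence $\vec{x}=\langle x_n\mid n<\omega\rangle$ together with an infinite and coinfinite $M\subseteq\omega$ such that $\fs(\vec{x})\subseteq A$, $\fs(\vec{x})\in p$, and $\fs(\vec{x}\restriction M)\in p$; the pair will then witness sparseness. First I would use strong summability of $p$ together with the hypothesis to obtain $\vec{x}$ with $\fs(\vec{x})\subseteq A\cap\{z\in S\mid\pi_{\min(z)}(z)\neq\frac{1}{2}\}$ and $\fs(\vec{x})\in p$, and then, via the standard block-sum thinning available for strongly summable ultrafilters, refine $\vec{x}$ so that the minimum-support indices $\min(x_n)$ are strictly increasing and the representation map $a\mapsto\sum_{n\in a}x_n$ on $[\omega]^{<\omega}\setminus\{\varnothing\}$ is injective, while preserving $\fs(\vec{x})\in p$.

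The key consequence of these choices is that for every nonempty finite $a\subseteq\omega$ and $y=\sum_{n\in a}x_n$ one has $\min(y)=\min(x_{\min a})$, so the \emph{leading coefficient} $\pi_{\min(y)}(y)$ equals $t_{\min a}$, where $t_n:=\pi_{\min(x_n)}(x_n)\in\mathbb T\setminus\{0,\frac{1}{2}\}$. In particular, the map $\Phi\colon\fs(\vec{x})\to\mathbb T$ sending $\sum_{n\in a}x_n$ to $\sum_{n\in a}t_n$ is well defined (by injectivity of the representation) and commutes with finite sums supported on pairwise disjoint index sets.

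The main step is then to transfer the problem to the one-dimensional setting where Theorem 4.2 applies. Let $T\subseteq\mathbb T$ be the countable subsemigroup generated by $\{t_n\mid n<\omega\}$, and define $q\in\beta T$ by $B\in q\iff\Phi^{-1}(B)\in p$. The aim is to show that $q$ is a nonprincipal strongly summable ultrafilter on $T$; then Theorem 4.2 yields an infinite and coinfinite $M\subseteq\omega$ with $\fs(\langle t_n\mid n\in M\rangle)\in q$, and lifting via $\Phi$ gives $\fs(\vec{x}\restriction M)\in p$ as required. Strong summability of $q$ transfers from $p$ essentially for free, by applying $\Phi$ to the $\fs$-witnesses of $p$ inside $\fs(\vec{x})$ and invoking the disjoint-sum compatibility of $\Phi$ noted above.

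The hardest part will be verifying that $q$ is nonprincipal. This can fail when there are many additive relations among the $t_n$'s in $\mathbb T$; the canonical source of such collapses is the $2$-torsion element $\frac{1}{2}$, and the whole purpose of the hypothesis $\pi_{\min(x)}(x)\neq\frac{1}{2}$ is to exclude it from the values $t_n$. A further careful thinning of $\vec{x}$, using this hypothesis to rule out the $2$-torsion-driven coincidences among the finite sums $\sum_{n\in a}t_n$, should deliver the required nonprincipality of $q$ and complete the reduction to Theorem 4.2.
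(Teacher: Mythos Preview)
The paper does not prove this theorem: it is quoted in the introduction as \cite[Th.~4.5]{jurisetal} and then used as a black box in Section~3. So there is no proof here to compare your proposal against. The paper does, however, record a sharper form of what \cite{jurisetal} actually establishes: there exists $X\in p$ such that \emph{every} sequence $\vec{x}$ with $\fs(\vec{x})\subseteq X$ satisfies strong uniqueness of finite sums, and sparseness then follows from \cite[Th.~3.2]{jurisetal}. Thus the route in \cite{jurisetal} is through strong uniqueness of finite sums, not through a reduction to the one-dimensional case.

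Your reduction to $\mathbb T$ has real gaps. First, \cite[Th.~4.2]{jurisetal} only says that $q$ is sparse; it does not hand you an infinite coinfinite $M\subseteq\omega$ with $\fs(\langle t_n\mid n\in M\rangle)\in q$. Sparseness produces \emph{some} pair of sequences, not a subsequence of your chosen $\langle t_n\rangle$, and lifting an arbitrary witness back through $\Phi$ runs straight into the additive collapses among the $t_n$ that you have not ruled out. Second, to transfer strong summability to $q$ you need that whenever $p\ni\fs(\vec{y})\subseteq\fs(\vec{x})$, the $y_k$ can be taken as sums over pairwise disjoint index sets in $\vec{x}$; your appeal to ``standard block-sum thinning'' is not justified in an arbitrary abelian group, and in fact this disjointness is essentially the content of strong uniqueness of finite sums --- the very property the original proof establishes. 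Third, you yourself flag nonprincipality of $q$ as the hardest step and give only a vague plan. None of these three transfer steps is actually carried out, and the natural way to fill each of them would likely force you through strong uniqueness of finite sums anyway, which is the approach taken in \cite{jurisetal}.
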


So, for example, this theorem, as well as the method for proving it, cannot be applied if $p$ contains the 
set of $x\in\bigoplus_{n<\omega}\mathbb T$ all of whose nonzero entries equal $1/2$. This set is isomorphic to the 
countably infinite Boolean group $\bigoplus_{n<\omega}\mathbb Z_2$. While \cite{jurisetal} was still a preprint, it 
contained the question of whether it is consistent with $\zfc$ that there exists a nonprincipal nonsparse strongly 
summable ultrafilter on 
$\bigoplus_{n<\omega}\mathbb Z_2$. This question is answered in the negative in 
section 2, while section 3 gives a slight improvement of \cite[Cor. 4.6]{jurisetal}.

\section{Strongly Summable Ultrafilters in the Boolean Group}

By the Boolean group we mean the unique (up to isomorphism) countably infinite group all of whose nonidentity 
elements have order 
$2$. This group is usually thought of as the direct sum of countably many copies of $\mathbb Z_2$. However, we will think 
of it as the group whose underlying set is $[\omega]^{<\omega}$, equipped with the symmetric difference 
$\bigtriangleup$ as the group operation. For $(a_n)_{n<\omega}\in\bigoplus_{n<\omega}\mathbb Z_2$, we can define the 
support of $(a_n)_{n<\omega}$ by
$$\supp(a_n)_{n<\omega}=\{n<\omega\big|a_n=1\},$$
so that the mapping $(a_n)_{n<\omega}\longmapsto\supp(a_n)_{n<\omega}$ is an isomorphism from 
$\bigoplus_{n<\omega}\mathbb Z_2$ 
onto $[\omega]^{<\omega}$.

When dealing with $\fs$-sets on this group, we will talk about sets instead of sequences. Thus, if 
$\vec{x}=\langle x_n\big|n<\omega\rangle$ is a sequence of elements of $[\omega]^{<\omega}$, and 
$X=\{x_n\big|n<\omega\}$ is the range of that sequence, then instead of $\fs(\vec{x})$ we will write $\fd(X)$, the 
set of 
``finite symmetric differences'', in order to emphasize that the elements of our group are sets and that their 
``sum'' 
actually 
corresponds to taking symmetric differences, and using the fact that, even if the sequence $\vec{x}$ is not 
injective, that does not alter the resulting $\fs$-set. This means that, for example, if $x_i=x_j$, and $i,j\in a$ for $i\neq j$, then $\sum_{k\in a}x_k=\sum_{k\in a\setminus\{i,j\}}x_k$, due to the fact that every element of our group at hand has order $2$. We will use the uppercase roman letters $X,Y,Z$ to denote infinite subsets of $[\omega]^{<\omega}$ whenever we are interested in considering their sets of finite symmetric differences. The main result of this section, and of this 
paper, is the following theorem.

\begin{theorem}\label{teorema}
Let $p$ be a nonprincipal strongly summable ultrafilter on $[\omega]^{<\omega}$. Then, $p$ is sparse.
\end{theorem}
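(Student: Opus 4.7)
Given $A\in p$, my plan is to produce $X\supseteq Y$ with $\fd(X)\subseteq A$, both $\fd(X),\fd(Y)\in p$, and $X\setminus Y$ infinite. First I would use strong summability of $p$ to find a sequence $\vec x$ with $\fd(X)\subseteq A$ and $\fd(X)\in p$, and then refine $\vec x$ so that its range is pairwise disjoint as a family of subsets of $\omega$. This refinement combines the $\Delta$-system lemma with iterated applications of strong summability, so that passing to a ``basis-disjoint'' sub-structure preserves membership in $p$. Once $\vec x$ is pairwise disjoint, every $z\in\fd(X)$ has a unique nonempty $a_z\in[\omega]^{<\omega}$ with $z=\bigtriangleup_{k\in a_z}x_k$, and the subgroup $\fd(X)\cup\{0\}$ is identified with $[\omega]^{<\omega}$ via $x_n\leftrightarrow\{n\}$. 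The task becomes: find a coinfinite $M\subsetneq\omega$ with $\fd(\{x_n:n\in M\})\in p$.

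For this I would use that $p$ is an idempotent (Theorem 2.3 of \cite{protasov}). Applied to the projection $\pi_k$ of $\fd(X)\cup\{0\}$ onto the finite Boolean quotient by $\fd(\{x_n:n\geq k\})\cup\{0\}$, idempotence forces $\pi_k(p)$ to be the identity of the quotient (since in any Boolean group the only idempotent is $0$). Hence $\fd(\{x_n:n\geq k\})\in p$ for every $k<\omega$: the $\fd$-sets of all cofinite subfamilies of $X$ already lie in $p$, giving considerable flexibility for subsequent refinements.

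The core of the proof is the passage from cofinite to coinfinite. My approach is to fix a partition $\omega=M_0\sqcup M_1$ into two infinite pieces and consider the three-cell partition of $\fd(X)$ according to whether $a_z\subseteq M_0$, $a_z\subseteq M_1$, or $a_z$ meets both halves. One cell is in $p$, and strong summability supplies a witness $\vec y$ with $\fd(Y)\in p$ lying inside that cell. If the cell is $\fd(\{x_n:n\in M_i\})$ for some $i$, taking $M=M_i$ finishes the proof. The genuinely hard case is when $\fd(Y)\subseteq\{z:a_z\cap M_0\neq\varnothing \text{ and } a_z\cap M_1\neq\varnothing\}$. In that case the projection homomorphism $\pi_{M_0}(z)=\bigtriangleup_{k\in a_z\cap M_0}x_k$ pushes $p$ forward to a nonprincipal idempotent $\pi_{M_0}(p)$ on a copy of $[\omega]^{<\omega}$, and strong summability transfers through $\pi_{M_0}$ (by deleting zeros from the image of any witness sequence). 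My strategy is to iterate the construction, producing a nested refinement whose cumulative effect forces infinitely many indices of the original $\vec x$ to be excluded.

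The main obstacle I expect is making this iteration actually converge to a coinfinite $M$: naive iteration can allow each successive block sequence to cover a cofinite set of indices, so one must carefully interleave the cofinite closure from the second paragraph with a diagonal argument across the iterates, excluding one new index of $\omega$ at each step while preserving both strong summability and membership in $p$. This delicate bookkeeping, and the verification that the mixed case can be ruled out (or reduced to a completed case) using the combined information from $\pi_{M_0}(p)$ and $\pi_{M_1}(p)$, is what I anticipate to be the heart and hardest part of the proof.
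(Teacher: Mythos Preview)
Your plan has a genuine gap at precisely the point you flag: the ``mixed'' case of the three-cell partition is not handled, and the diagonal iteration you sketch is not carried out. More fundamentally, after the disjoint refinement you aim at a target strictly stronger than sparseness requires: you want a coinfinite $M$ with $\fd(\{x_n:n\in M\})\in p$, i.e., you insist that $Y$ be a subsequence of the \emph{particular} $X$ you began with. Sparseness only asks for \emph{some} pair $Y\subseteq X$ with $\fd(X)\subseteq A$, and it is exactly this self-imposed constraint that traps you in the mixed case. There is also a hidden issue in the refinement step: passing from a $\Delta$-system $\{x_n\}$ with root $r$ to the disjoint family $\{x_n\bigtriangleup r\}$ alters the single-element sums by $r$, so $\fd$ of the new family need not remain in $p$; obtaining a pairwise-disjoint generating family whose $\fd$-set is in $p$ is essentially the assertion that $p$ is a union ultrafilter, which you have not established.

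The paper avoids all of this with two elementary observations you do not use. First, in a Boolean group $\fd(\fd(Z))=\fd(Z)$, so one may take any $Y\subseteq\fd(Z)$ (not $Y\subseteq Z$) with $\fd(Y)\in p$, set $X=Y\cup Z$, and automatically get $\fd(X)\subseteq\fd(\fd(Z))=\fd(Z)\subseteq A$; the task reduces to finding such a $Y$ that misses infinitely many $z\in Z$. Second, for any weakly summable $p$ and any fixed $n<\omega$, the set $\{a:n\notin a\}$ is in $p$ (if $\{a:n\in a\}\in p$, any two elements of an $\fd$-witness inside it would have symmetric difference missing $n$). The paper then splits on $\limsup(Z)=\{n:n\in z\text{ for infinitely many }z\in Z\}$: if some $n$ lies in it, choose $Y$ with $\fd(Y)\subseteq\{a:n\notin a\}\cap\fd(Z)$ and all $z\ni n$ are excluded; if $\limsup(Z)=\varnothing$, a short recursion builds $N\subseteq\omega$ meeting each $z$ from an infinite $Z'\subseteq Z$ in exactly one point, and whichever of $\{a:a\cap N=\varnothing\}$, $\{a:a\cap N\neq\varnothing\}$ lies in $p$ already excludes infinitely many $z\in Z'$. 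No disjoint refinement, no iteration, and no idempotence beyond weak summability are needed.
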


In order to prove this result, we need first of all a lemma which tells us that weakly summable ultrafilters in 
$[\omega]^{<\omega}$ have a property that is somewhat analogous to that of extending the Fr\'echet filter. Recall 
that an ultrafilter $p$ on an abelian semigroup $G$ is \textbf{weakly summable} if for every $A\in p$ there is a 
sequence 
$\vec{x}$ of elements of $G$ such that $\fs(\vec{x})\subseteq A$. Thus every strongly summable ultrafilter is 
weakly summable, and actually (\cite[Th. 12.17]{hindmanstrauss}) every idempotent ultrafilter on an arbitrary 
semigroup is weakly summable (and in fact \cite[Th. 12.17]{hindmanstrauss} an ultrafilter is weakly summable if and 
only if it is a 
closure point in $\beta G$ of the set of idempotents). Notice that a principal weakly summable ultrafilter must 
be idempotent, in particular if $G$ is a group then the only principal weakly summable ultrafilter is the one 
that corresponds to the identity element.

\begin{lemma}\label{keylemma}
Let $p$ be a weakly summable ultrafilter on $[\omega]^{<\omega}$. Then for any $n<\omega$, there exists an $A\in p$ 
such that $n\notin\union A$.
\end{lemma}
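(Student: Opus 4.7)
The plan is to argue by contradiction, exploiting the fact that every element of the Boolean group has order two. Fix $n<\omega$ and decompose $[\omega]^{<\omega}$ as the disjoint union of $H=\{a\in[\omega]^{<\omega}\big|n\notin a\}$ and $K=\{a\in[\omega]^{<\omega}\big|n\in a\}$. Since $p$ is an ultrafilter, exactly one of $H,K$ belongs to $p$. The conclusion of the lemma, namely $n\notin\union A$ for some $A\in p$, is immediate once we know $H\in p$ (take $A=H$), so it suffices to rule out the alternative $K\in p$.

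Suppose for contradiction that $K\in p$. By weak summability applied to $K$, there is a sequence $\vec{x}=\langle x_m\big|m<\omega\rangle$ of elements of $[\omega]^{<\omega}$ with $\fs(\vec{x})\subseteq K$. Considering the singleton index sets $a=\{m\}$, every $x_m$ itself lies in $K$, so $n\in x_m$ for each $m<\omega$. Now specialize to the two-element index set $a=\{0,1\}$: the element $x_0\bigtriangleup x_1$ belongs to $\fs(\vec{x})\subseteq K$, and therefore must contain $n$. On the other hand, since $n$ lies in both $x_0$ and $x_1$, it is cancelled by the symmetric difference, so $n\notin x_0\bigtriangleup x_1$. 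This is the desired contradiction.

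There is really no serious obstacle here; the argument reduces to a short observation once one has chosen the right clopen partition of $[\omega]^{<\omega}$. The content of the lemma is that weak summability is incompatible with concentrating on the coset ``sets containing $n$'', and the mechanism forcing this incompatibility is precisely the order-two nature of the Boolean group, which makes any coordinate shared by two summands vanish from their sum. It is worth noting that this phenomenon is special to groups of exponent two: in a general abelian group one cannot rule out an ultrafilter concentrating on a fixed nontrivial coset of a finite-index subgroup, so the statement of the lemma is genuinely a feature of $[\omega]^{<\omega}$.
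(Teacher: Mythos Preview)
Your proof is correct and follows essentially the same approach as the paper's: partition $[\omega]^{<\omega}$ into the sets of finite subsets containing or not containing $n$, and rule out the latter belonging to $p$ by noting that two elements containing $n$ have a symmetric difference not containing $n$. The only cosmetic difference is that the paper treats the principal case separately, whereas your argument absorbs it (since $\varnothing\in H$), and the paper phrases the contradiction in terms of two distinct elements of an infinite set rather than $x_0,x_1$.
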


\begin{proof}
If $p$ is principal, then $\{\varnothing\}\in p$ will do. Otherwise, let $A_0=\{a\in[\omega]^{<\omega}\big|n\notin a\}$ and 
$A_1=[\omega]^{<\omega}\setminus A_0=\{a\in[\omega]^{<\omega}\big|n\in a\}$. There is $j\in2$ such that $A_j\in p$. 
But $j$ cannot equal $1$, for otherwise, since $p$ is weakly summable, there would be an infinite set 
$X\subseteq[\omega]^{<\omega}$ such that 
$\fd(X)\subseteq A_1$, so if $x,y\in X$ are two distinct elements, we have that $n\in x$ and $n\in y$, thus 
$n\notin x\bigtriangleup y\in\fd(X)\subseteq A_1$, a contradiction. Therefore $A_0\in p$, and certainly it is true 
that 
$n\notin\union A_0$.
\end{proof}

\begin{corollary}
If $p$ is a weakly summable ultrafilter, then for any finite subset $a$ of $\omega$, there is $A\in p$ such that 
$\union A$ is disjoint from $a$.
\end{corollary}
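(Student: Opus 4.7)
The plan is to deduce the corollary directly from Lemma \ref{keylemma} by a simple induction on the size of $a$, or equivalently, by taking a finite intersection in the filter $p$.

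More explicitly, I would write $a = \{n_1, \ldots, n_k\}$ and apply Lemma \ref{keylemma} to each $n_i$ to obtain sets $A_i \in p$ such that $n_i \notin \bigcup A_i$ for every $i \in \{1, \ldots, k\}$. Since $p$ is a filter (hence in particular closed under finite intersections), the set $A = \bigcap_{i=1}^k A_i$ belongs to $p$.

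Finally, I would verify that $\bigcup A$ is disjoint from $a$: if $x \in A$, then $x \in A_i$ for each $i$, so $x \subseteq \bigcup A_i$ and thus $n_i \notin x$ for every $i$, meaning $x \cap a = \varnothing$. Since this holds for each $x \in A$, we conclude that $\bigcup A$ is disjoint from $a$, as desired.

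There is essentially no obstacle here; the only thing to note is that the trivial case $a = \varnothing$ is handled by taking $A = [\omega]^{<\omega} \in p$. The corollary is really just the observation that the conclusion of Lemma \ref{keylemma}, being a filter-type statement about avoiding a single element, automatically closes under finite unions in the parameter.
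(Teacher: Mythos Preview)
Your proof is correct and follows essentially the same approach as the paper: apply Lemma~\ref{keylemma} to each element of $a$ and intersect the resulting sets. The only cosmetic difference is that the paper separates out the case where $p$ is principal (taking $A=\{\varnothing\}$) rather than the case $a=\varnothing$, but this is inessential since Lemma~\ref{keylemma} already covers the principal case.
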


\begin{proof}
If $p$ is principal, then $\{\varnothing\}\in p$ will do. Otherwise, for each $n\in a$, choose $A_n\in p$ such that 
$n\notin\union A_n$. Then $p\ni A=\inter_{n\in a}A_n$, and certainly this set is as required.
\end{proof}

Originally, the author had a much more involved proof for the previous corollary, whith ideas 
similar to those of \cite[Th. 2.6]{jurisetal} and \cite[Th. 4]{krautzberger}, until he came up with the much simpler 
one that is presented above.

The fact that all elements of $[\omega]^{<\omega}$ have order $2$ has some remarkable consequences, amongst which the 
following is relevant for our purposes.

\begin{lemma}\label{fdoffd}
Let $X\subseteq[\omega]^{<\omega}$. Then, $\fd(\fd(X))=\fd(X)$.
\end{lemma}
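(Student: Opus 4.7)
The plan is to prove the two inclusions separately, with both being essentially computational consequences of the order-2 nature of the Boolean group.

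The inclusion $\fd(X)\subseteq\fd(\fd(X))$ is immediate: any $y\in\fd(X)$ is itself a nonempty finite symmetric difference of elements of $\fd(X)$, namely the ``singleton'' symmetric difference indexed by $\{y\}$. So the only real content of the lemma is the reverse containment $\fd(\fd(X))\subseteq\fd(X)$, which I would attack by unfolding the definition twice.

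Concretely, take $z\in\fd(\fd(X))$, so that $z=y_1\bigtriangleup\cdots\bigtriangleup y_m$ for some distinct $y_1,\ldots,y_m\in\fd(X)$. Each $y_i$ is in turn a finite symmetric difference of elements of $X$, so write $y_i=\fd_{x\in a_i}x$ for some nonempty finite $a_i\subseteq X$ (abusing notation slightly for the indexed operation). Substituting yields
$$z\ =\ \bigtriangleup_{i\le m}\bigtriangleup_{x\in a_i}x.$$
Now the order-2 property of $[\omega]^{<\omega}$ kicks in: every element is its own inverse under $\bigtriangleup$, so any $x\in X$ that appears in an even number of the $a_i$'s contributes nothing to the overall symmetric difference. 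Letting $a\subseteq X$ consist of those $x$ appearing in an odd number of the $a_i$'s (so $a$ is the symmetric difference $a_1\bigtriangleup\cdots\bigtriangleup a_m$ viewed inside $X$), I conclude $z=\bigtriangleup_{x\in a}x$, which exhibits $z$ as an element of $\fd(X)$.

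The main obstacle, if one wants to be punctilious, is the edge case where the resulting $a$ is empty, in which case $z=\varnothing$ is the group identity, which under a strict reading of the definition need not belong to $\fd(X)$. In the intended applications the ultrafilters involved are nonprincipal, so $\varnothing$ plays no role, and one can safely treat it via the standing convention that $\fd$-sets are considered up to the identity. Apart from this bookkeeping nicety, the proof is just a careful substitution plus the mod-2 cancellation that is built into the Boolean group structure.
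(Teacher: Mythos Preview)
Your argument is correct and follows essentially the same route as the paper: the paper also gets $\supseteq$ from $X\subseteq\fd(X)$, and for $\subseteq$ invokes the identity $\sum_{x\in a}x+\sum_{y\in b}y=\sum_{z\in a\bigtriangleup b}z$ (the two-term instance of your mod-$2$ cancellation) and then appeals to induction, which unwinds to exactly your direct substitution. Your remark about the edge case $a=\varnothing$ is well taken---the paper's induction glosses over the same point---and your handling of it (irrelevant for nonprincipal ultrafilters) is the right attitude.
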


\begin{proof}
The ``$\supseteq$" part of the equality follows from the fact that $X\subseteq\fd(X)$, and holds in any (semi)group. 
Now let us illustrate the ``$\subseteq$" part with the case when we add two finite sums. Thus let 
$a,b\in[X]^{<\omega}\setminus\{\varnothing\}$ be distinct, and notice that, since every element in our group at hand 
has order two, the following holds:
$$\sum_{x\in a}x+\sum_{y\in b}y=\sum_{z\in a\bigtriangleup b}z\in\fd(X),$$
and from this it is easy to conclude, by induction, the desired result.
\end{proof}

Now in order to prove our main result, namely Theorem~\ref{teorema}, let $p$ be a nonprincipal strongly summable 
ultrafilter on 
$[\omega]^{<\omega}$. We want to show that $p$ is sparse, thus pick $A\in p$, and pick $Z$ such that $p\ni\fd(Z)\subseteq A$. We would like to find some sets $X,Y$ such that $Y\subseteq X$, $\fd(X)\subseteq A$, $\fd(Y)\in p$ and $X\setminus Y$ is infinite. We will do so as follows.

\begin{claim}\label{claim}
It is possible to find a $Y$ such that $Y\subseteq\fd(Z)$, $\fd(Y)\in p$, and such that there are infinitely many 
$z\in Z$ with $z\notin Y$.
\end{claim}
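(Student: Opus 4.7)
The plan is a case split on whether $Z\in p$ or $Z\notin p$, applying strong summability to a suitably chosen member of $p$ sitting inside $\fd(Z)$ in each case. First suppose $Z\notin p$; since $p$ is an ultrafilter, $[\omega]^{<\omega}\setminus Z\in p$, so $\fd(Z)\setminus Z\in p$. I would then apply the strong summability of $p$ to $\fd(Z)\setminus Z$ to obtain a $Y$ with $\fd(Y)\in p$ and $\fd(Y)\subseteq\fd(Z)\setminus Z$. Since $Y\subseteq\fd(Y)$, we get $Y\subseteq\fd(Z)$ and $Y\cap Z=\varnothing$. As $p$ is nonprincipal, $\fd(Z)$—and hence $Z$—is infinite, so in fact every element of $Z$ witnesses the conclusion of the claim.

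Now suppose $Z\in p$. I would partition $Z=Z_0\sqcup Z_1$ into two infinite subsets; by the ultrafilter property exactly one of them—say $Z_0$—belongs to $p$. Applying strong summability to $Z_0\in p$, I obtain $Y$ with $\fd(Y)\in p$ and $\fd(Y)\subseteq Z_0$. Then $Y\subseteq\fd(Y)\subseteq Z_0\subseteq Z\subseteq\fd(Z)$, while $Z_1\subseteq Z\setminus Y$ is infinite, providing the required witnesses $z\in Z$ with $z\notin Y$.

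The main conceptual point is recognising that a direct application of strong summability to $\fd(Z)$ itself can return a $Y$ that contains all of $Z$, leaving no room for the excess the claim demands. The fix is to first move within $p$ either to a set disjoint from $Z$ (Case 1) or into a prechosen infinite half of $Z$ (Case 2), so that the $Y$ produced by strong summability is automatically forced to leave infinitely many $z\in Z$ outside. This argument uses only the ultrafilter property together with strong summability; I do not expect Lemma~\ref{keylemma} or its corollary to be needed for this particular claim, although they presumably enter in subsequent steps of the proof of Theorem~\ref{teorema}.
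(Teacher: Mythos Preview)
Your argument is correct and considerably simpler than the paper's. The paper splits instead on whether $\limsup(Z)=\{n<\omega\mid n\in z\text{ for infinitely many }z\in Z\}$ is nonempty: if some $n$ lies in infinitely many $z\in Z$, Lemma~\ref{keylemma} is invoked to find $B\in p$ with $n\notin\bigcup B$, and any $Y$ with $\fd(Y)\subseteq B\cap\fd(Z)$ then misses those infinitely many $z$; if $\limsup(Z)=\varnothing$, the paper carries out a somewhat delicate recursive construction to produce a set $N\subseteq\omega$ that separates $Z$ into two infinite pieces according to whether $z\cap N=\varnothing$, then uses the ultrafilter property on that partition. Your case split on $Z\in p$ versus $Z\notin p$ bypasses all of this combinatorics: in the first case you simply discard half of $Z$ before applying strong summability, and in the second you pass to $\fd(Z)\setminus Z\in p$ directly. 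The upshot is that Lemma~\ref{keylemma} and its corollary are not actually needed for Theorem~\ref{teorema} at all; your route uses only the ultrafilter property, strong summability, and the identity $\fd(\fd(Z))=\fd(Z)$ of Lemma~\ref{fdoffd}. The paper's approach does extract a bit more structural information about how $Y$ sits relative to $Z$ (via the explicit separating coordinate $n$ or set $N$), but none of that information is used downstream.
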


\begin{proof}[Proof of Theorem~\ref{teorema} from Claim~\ref{claim}]
Let $X=Y\cup Z$. Then Claim~\ref{claim} 
guarantees that $X\setminus Y$ is infinite. Moreover $\fd(Y)\in p$, and now by Lemma~\ref{fdoffd} we get that 
$\fd(X)\subseteq\fd(\fd(Z))=\fd(Z)\subseteq A$, and we are done.
\end{proof}

Thus, the only thing that remains to be proved is Claim~\ref{claim}.

\begin{proof}[Proof of Claim~\ref{claim}]
Consider the set $\limsup(Z)$ which contains exactly those $n<\omega$ such that $n\in z$ for infinitely many distinct 
$z\in Z$. Then if this set is nonempty, say $n\in\limsup(Z)$, we can use Lemma~\ref{keylemma} to get a $B\in p$ such that 
$n\notin\union B$. Since $p$ is strongly summable, we can find a 
$Y$ such that $p\ni\fd(Y)\subseteq B\cap\fd(Z)$. Then 
$Y\subseteq\fd(Y)\subseteq\fd(Z)$, and for each $z\in Z$ containing $n$ (and by assumption there are infinitely many such) we 
have that $z\notin Y$, because otherwise we would have $n\in\union B$ contradicting our choice of $n$ and $B$.

The other case is when $\limsup(Z)=\varnothing$. In this case, let $M=\union Z$. Then $M$ is an infinite 
subset of $\omega$, with the property that each $n\in M$ is contained in only finitely many $z\in Z$; and we will construct 
by recursion a very special subset of $M$. Start by letting $m_0=\min(M)$, 
$Z_0=\{z\in Z\big|m_0\in z\}$ and $N_0=\union Z_0$. Then both $Z_0,N_0$ are finite and nonempty (although $Z_0$ is a 
subset of $[\omega]^{<\omega}$, whilst $N_0$ is a subset of $\omega$). Now recursively define 
$$m_{n+1}=\min\left(M\setminus\union_{k\leq n}N_k\right),$$
$Z_{n+1}=\{z\in Z\big|m_{n+1}\in z\}$, and $N_{n+1}=\union Z_{n+1}$. Then again 
$Z_{n+1}$ is finite, nonempty, and disjoint from all previous $Z_k$. Also $N_{n+1}$ is finite and nonempty, although the 
$N_k$ 
need not be disjoint, and of course $m_{n+1}>m_n$. Now notice that $Z'=\union_{k<\omega}Z_k$ is an infinite subset 
of $Z$, and if $z\in Z_k$, then 
$z\cap\{m_n\big|n<\omega\}=\{m_k\}$. Thus if we let
$N=\{m_{2n}\big|n<\omega\}$, then for every $z\in Z'$, $z\cap N$ will be nonempty if and only if $z\in Z_k$ for some 
even index $k$. 
Let $B_0=\{s\in[\omega]^{<\omega}\big|s\cap N=\varnothing\}$ and $B_1=[\omega]^{<\omega}\setminus B_0$. Now notice 
that 
whenever $z\in Z_k$ for some $k\equiv i(\!\!\!\!\mod 2)$, we must have that $z\notin B_i$. Thus, if we let $i\in 2$ be such that $B_i\in p$, we 
will have that $z\notin B_i$ for all $z\in\union_{n<\omega}Z_{2n+i}$, and there are infinitely many such. Now using the fact 
that $p$ is strongly summable, just pick $Y$ 
such that $p\ni\fd(Y)\subseteq B_i\cap\fd(Z)$. 
\end{proof}

I am thankful to Juris Stepr\=ans for pointing out an error in an earlier version of this proof, as well as to 
the anonymous referee for useful comments on it.

\section{Existence of Nonsparse Strongly Summable Ultrafilters}

In this section we will investigate a necessary condition for the existence of a nonsparse strongly summable 
ultrafilter on some abelian group $G$. This represents some partial progress towards answering \cite[Question 4.12]{jurisetal}, and sheds some light on what an answer to that question might look like.

Let $G$ be any abelian group, $S$ a subsemigroup of $G$, and $p\in\beta G$ an ultrafilter such that $S\in p$. Then 
$p\upharpoonright S=p\cap\mathfrak P(S)$ will be an ultrafilter on $S$, and it is easy to see that 
$p\upharpoonright S$ is a nonprincipal ultrafilter if and only if 
$p$ is. It is also reasonably straightforward to see that $p\upharpoonright S$ is strongly summable if and only if 
$p$ is, and also that $p\upharpoonright S$ 
is sparse if and only if $p$ is, because $A\in p$ if and only if $A\cap S\in p\upharpoonright S$. Now if $G$ is any 
infinite abelian 
group, and 
$p\in\beta G$ is a strongly summable ultrafilter, then by definition, there is a sequence 
$\vec{x}=\langle x_n\big|n<\omega\rangle$ such that $\fs(\vec{x})\in p$. If we let $S$ denote the subsemigroup of $G$ 
generated by $\{x_n\big|n<\omega\}$, then it must be the case that $S$ is countable (and cancellative). Since 
$\fs(\vec{x})\subseteq S$, then $S\in p$, and thus $q=p\upharpoonright S$ will be strongly summable. Moreover the question of 
whether $p$ is sparse reduces to the 
question of whether $q$ is sparse. Thus, when investigating the possibility of a strongly summable ultrafilter being 
nonsparse on an arbitrary abelian group, we may as well focus our attention on strongly summable ultrafilters on 
countable 
cancellative abelian semigroups.

Now if $S$ is a countable cancellative abelian semigroup, then it can be embedded in a countable abelian group $H$ 
(just in 
the same way that $(\mathbb N,+)$ can be embedded into $(\mathbb Z,+)$, or $(\mathbb Z\setminus\{0\},\cdot)$ into 
$(\mathbb Q\setminus\{0\},\cdot)$). And it is a well-known result (see, e.g., \cite[Th. 24.1]{fuchs}, \cite[4.1.6]{robinson}, or \cite[Th. 9.23]{rotman}) 
that every abelian group can be embedded in a divisible group; moreover, each divisible group is a direct sum of 
copies of $\mathbb Q$ and of quasicyclic 
groups (\cite[Th. 23.1]{fuchs}, \cite[4.1.5]{robinson}, or \cite[Th. 9.14]{rotman}). Since $\mathbb Q$, as well as 
all quasicyclic groups, can certainly 
be embedded in $\mathbb T$, the conclusion is that every countable abelian group $H$ can be embedded in the direct 
sum of 
countably many circle groups $G=\bigoplus_{n<\omega}\mathbb T$. (From now on, $G$ will denote that group). Thus we 
can think of $S$ as a subset of $G$, 
and if 
$q\in\beta S$ is an ultrafilter, then by letting $p$ be the filter on $G$ generated by $q$, we will actually get an 
ultrafilter. Moreover $S\in p$, and $q=p\upharpoonright S$, so $q$ will be strongly summable if and only if $p$ is. And again, the 
question of whether $q$ is sparse reduces to the question of whether $p$ is sparse. Therefore, the whole investigation of 
whether there is a nonsparse strongly summable ultrafilter on some abelian group (or abelian cancellative semigroup) reduces 
to the question of whether there exists a nonsparse strongly summable ultrafilter on $G$.

Our starting point will be the following Theorem of Hindman, Stepr\=ans and Strauss.

\begin{theorem}[\cite{jurisetal}, Cor. 4.6]
Let $p$ be a nonsparse nonprincipal strongly summable ultrafilter on $G$. Then $p$ contains the set of elements of 
$G$ whose order is some power of $2$.
\end{theorem}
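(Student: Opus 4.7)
The plan is to assume for contradiction that the conclusion fails and to derive that $p$ is sparse, using Theorem~4.5 of \cite{jurisetal} (stated above) after passing to a strategic reembedding of a countable subgroup of $G$. So suppose $p$ is nonsparse, nonprincipal, and strongly summable on $G=\bigoplus_{n<\omega}\mathbb{T}$ while $G[2^\infty]:=\bigoplus_{n<\omega}\mathbb{T}[2^\infty]\notin p$; I shall show that $p$ must in fact be sparse.

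The contrapositive of Theorem~4.5 gives $A:=\{x:\pi_{\min(x)}(x)=1/2\}\in p$, and our assumption gives $B:=G\setminus G[2^\infty]\in p$, so $A\cap B\in p$. The critical first step, and the main obstacle, is to produce a realizing sequence $\vec{x}=\langle x_n : n<\omega\rangle$ with $\fs(\vec{x})\subseteq A\cap B$, $\fs(\vec{x})\in p$, enjoying both: (i) the supports $\supp(x_n)\subseteq\omega$ are pairwise disjoint with $\max\supp(x_n)<\min\supp(x_{n+1})$; and (ii) for each $n$ there is a distinguished $\ell_n\in\supp(x_n)$ with $\ell_n>\min(x_n)$ and $\pi_{\ell_n}(x_n)=:t_n\notin\mathbb{T}[2^\infty]$. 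Property (ii) for a single $n$ is immediate from $x_n\in A\cap B$; arranging (i) simultaneously needs the appropriate generalization to $\bigoplus_{n<\omega}\mathbb{T}$ of Lemma~\ref{keylemma} and of the standard refinement lemmas for strongly summable ultrafilters (cf.\ \cite[Th. 12.17, Lemma 12.20]{hindmanstrauss}), which is not entirely routine because the Boolean-group argument of Lemma~\ref{keylemma} does not transfer verbatim to an arbitrary abelian group.

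Assuming such a $\vec{x}$ is in hand, let $H:=\langle x_n:n<\omega\rangle\leq G$, so that $H\in p$ and $p\upharpoonright H$ is a nonsparse nonprincipal strongly summable ultrafilter on $H$. I then construct a different embedding $\iota:H\hookrightarrow\bigoplus_{n<\omega}\mathbb{T}$ by fixing a bijection $\sigma:\bigcup_{n<\omega}\supp(x_n)\to\omega$ that maps each $\supp(x_n)$ onto a block $I_n:=[K_n,K_{n+1})\subseteq\omega$ of the matching cardinality, with $\sigma(\ell_n)=K_n$ (so that $\ell_n$ becomes the minimum of its new block), and declaring $\iota$ to be the induced coordinate relabeling $(\iota(x))_{\sigma(j)}=\pi_j(x)$; this $\iota$ is a group isomorphism from $H$ onto $\iota(H)$. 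Pairwise disjointness of the blocks $I_n$ then implies that any nonzero $y=\sum_{n\in a}\iota(x_n)\in\iota(\fs(\vec{x}))$ satisfies $\min(y)=K_{\min(a)}$, whence $\pi_{\min(y)}(y)=t_{\min(a)}\neq 1/2$.

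Consequently $\iota_*(p\upharpoonright H)$ is a nonprincipal strongly summable ultrafilter on the countable subsemigroup $\iota(H)$ of $\bigoplus_{n<\omega}\mathbb{T}$ which satisfies the hypothesis of Theorem~4.5, so that theorem yields that $\iota_*(p\upharpoonright H)$ is sparse; sparseness pulls back through the isomorphism $\iota$ to $p\upharpoonright H$, and thence, as explained at the start of this section, to $p$ itself (since $H\in p$), the desired contradiction.
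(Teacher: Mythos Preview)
This theorem is quoted in the paper as \cite[Cor.~4.6]{jurisetal} and is not proved here, so there is no in-paper argument to compare your attempt against. I will therefore assess your outline on its own terms.

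Your reembedding idea in the final two paragraphs is sound: once you have a realizing sequence $\vec{x}$ with $\fs(\vec{x})\in p$, $\fs(\vec{x})\subseteq A\cap B$, and pairwise disjoint block-ordered supports, relabelling coordinates so that the ``bad'' index $\ell_n$ becomes the minimum of each block does force $\pi_{\min(y)}(y)\notin\mathbb T[2^\infty]$ (hence ${}\neq 1/2$) for every $y\in\iota[\fs(\vec{x})]$, and then \cite[Th.~4.5]{jurisetal} applied to $\iota(H)$ gives sparseness, which pulls back. That part is fine.

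The genuine gap is exactly where you flag it: you have not established property~(i). Saying it ``needs the appropriate generalization of Lemma~\ref{keylemma}\ldots which is not entirely routine'' is an honest admission that the key step is missing. This step is the heart of the matter, not a technicality. What you need is that for every $n$ the set $\{x\in G:\pi_n(x)=0\}$ belongs to $p$; from that, an iterated refinement (using that $p$ is idempotent and strongly summable, along the lines of \cite[Lemmas~12.20,~12.34]{hindmanstrauss}) produces the disjoint-support sequence with $\fs$-set still in $p$. Interestingly, the machinery that would give you $\{x:\pi_n(x)=0\}\in p$ is precisely what the present paper develops \emph{after} citing this theorem: the proof of Lemma~\ref{elunmedio} (the three-way partition $X_0\cup X_1\cup X_2$ by the residue mod~$3$ of the leading binary digit of $\pi_n(x)$) works verbatim for arbitrary $\pi_n(x)\in(0,1/2)\subseteq\mathbb T$, not just for dyadic rationals, and then Lemma~\ref{finitecoordinate} finishes it. So your route is viable, but to make it a proof you must actually carry out that generalization and the subsequent diagonalization, rather than defer them.
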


It is not hard to see that the set of elements of $G$ whose order is a power of two is exactly 
$H=\bigoplus_{n<\omega}\mathbb T[2^\infty]$, where 
$$\mathbb T[2^\infty]=\left\{t\in\mathbb T\bigg|\left(\exists m,n\in\omega\right)\left(t=\frac{m}{2^n}\right)\right\}$$
is the quasicyclic $2$-group (also known as the Pr\"ufer group of type $2^\infty$). From now on 
we will focus on strongly summable ultrafilters on that group (which we will keep denoting by $H$). We will also be using the groups 
$$\mathbb T[2^n]=\left\{t\in\mathbb T\bigg|\left(\exists m\in\omega\right)\left(t=\frac{m}{2^n}\right)\right\}\cong\mathbb Z_{2^n},$$
the isomorphism being given by $\frac{m}{2^n}\longmapsto m$ (whenever we refer to a number $l\in\mathbb Z$ as an 
element of $\mathbb Z_k$, we really mean its coset modulo $k$). Notice that if $n<m<\omega$, then 
$\mathbb T[2^n]\subseteq\mathbb T[2^m]$, and $\mathbb T[2^\infty]=\union_{n<\omega}\mathbb T[2^n]$.

Before stating our first lemma, we need to recall a definition.

\begin{definition}[\cite{jurisetal}, Def. 3.1]
A sequence $\vec{x}$ on an abelian semigroup $S$ is said to satisfy \textbf{strong uniqueness of finite sums}
if for each $a,b\in[\omega]^{<\omega}\setminus\{\varnothing\}$, 
\begin{itemize}
 \item If $\sum_{k\in a}x_k=\sum_{k\in b}x_k$ then $a=b$.
 \item If $\sum_{k\in a}x_k+\sum_{k\in b}x_k\in\fs(\vec{x})$, then $a\cap b=\varnothing$.
\end{itemize}
\end{definition}

By \cite[Th. 3.2]{jurisetal}, if $p$ is a nonprincipal ultrafilter, and for each $A\in p$ there is a sequence 
$\vec{x}$ satisfying strong uniqueness of finite sums such that $p\ni\fs(\vec{x})\subseteq A$, then $p$ is sparse.

\begin{lemma}\label{elunmedio}
Let $p$ be a nonsparse strongly summable ultrafilter on $H$. Then for each $n<\omega$, the set 
$B_n=\pi_n^{-1}[\mathbb T[2]]=\{x\in H\big|\pi_n(x)\in\{0,1/2\}\}\in p$.
\end{lemma}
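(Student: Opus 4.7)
The plan is to proceed by induction on $n$, aiming in each case for a contradiction via \cite[Theorem 4.5]{jurisetal}. Since the only principal strongly summable ultrafilter is concentrated at $0\in B_n$, I assume throughout that $p$ is nonprincipal. Suppose for contradiction that $B_n\notin p$, so $C_n:=\{x\in H:\pi_n(x)\notin\mathbb{T}[2]\}\in p$; take as inductive hypothesis that $B_k\in p$ for every $k<n$ (vacuously true when $n=0$).

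The key first step is to show $\{x\in H:\pi_k(x)=0\text{ for all }k<n\}\in p$. The inductive hypothesis forces the push-forward $\pi_{[0,n)}(p)$ to live on the finite $2$-torsion group $\mathbb{T}[2]^n$, so it is principal at some $t_0$. Using that $p$ is weakly summable (implied by strong summability), pick a sequence $\vec{x}$ with $\fs(\vec{x})\subseteq\{x:\pi_{[0,n)}(x)=t_0\}$: then $\pi_{[0,n)}(x_0)=t_0$ while $\pi_{[0,n)}(x_0+x_1)=2t_0=0$ in $\mathbb{T}[2]^n$, yet $x_0+x_1$ also lies in this set, forcing $t_0=0$.

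Combining this with $C_n\in p$ gives that for $p$-most $x$, $\pi_k(x)=0$ for all $k<n$ while $\pi_n(x)\notin\{0,\frac{1}{2}\}$. Hence $\min(x)=n$ and $\pi_{\min(x)}(x)=\pi_n(x)\neq\frac{1}{2}$, so $\{x\in H:\pi_{\min(x)}(x)\neq\frac{1}{2}\}\in p$. Since $H$ is a countable subsemigroup of $\bigoplus_{n<\omega}\mathbb{T}$ and $p$ is a nonprincipal strongly summable ultrafilter on it, \cite[Theorem 4.5]{jurisetal} now forces $p$ to be sparse, contradicting the hypothesis.

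The only substantive obstacle is the first step, collapsing the initial finite projection to $0$; the mechanism is that in any $2$-torsion group every element $t$ satisfies $2t=0$, so the only value to which the $\pi_{[0,n)}$-image of a weakly summable ultrafilter can be confined is $0$ itself. Once that reduction is in place, \cite[Theorem 4.5]{jurisetal} does all the remaining work.
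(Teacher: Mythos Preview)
Your proof is correct, but it follows a genuinely different route from the paper's.

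The paper argues directly at the single coordinate $n$: assuming $B_n\notin p$ (and, say, $\{x:\pi_n(x)\in(0,1/2)\}\in p$), it partitions according to the position modulo~$3$ of the leading nonzero binary digit of $\pi_n(x)$. One of the three cells $X_j$ lies in $p$, and any $\fs$-set contained in $X_j$ has at most one term per interval $[2^{-(3m+j+2)},2^{-(3m+j+1)})$; reordering so that the $\pi_n(x_k)$ decrease gives $\pi_n(x_k)>4\pi_n(x_{k+1})$, hence $\pi_n(x_k)>3\sum_{l>k}\pi_n(x_l)$, which forces strong uniqueness of finite sums. Sparseness then comes from \cite[Th.~3.2]{jurisetal}.

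You instead induct on $n$. The inductive hypothesis $B_k\in p$ for $k<n$, together with the weak-summability argument ``$t_0=2t_0=0$'', collapses the first $n$ coordinates to~$0$ on a $p$-large set; this is exactly the mechanism of the paper's later Lemma~\ref{finitecoordinate} and Corollary~\ref{seconcentraencero}, which you have in effect pulled forward and folded into the induction. Intersecting with $C_n$ then pins $\min(x)=n$ with $\pi_{\min(x)}(x)\neq 1/2$, so \cite[Th.~4.5]{jurisetal} applies directly.

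Your argument is shorter and sidesteps the ad~hoc binary-digit combinatorics, at the price of invoking the heavier black box \cite[Th.~4.5]{jurisetal} rather than the more elementary \cite[Th.~3.2]{jurisetal}; it also merges the present lemma with the paper's subsequent Corollary~\ref{seconcentraencero} into one inductive package. The paper's approach, by contrast, treats each coordinate in isolation and yields the explicit rapid-decay structure on $\pi_n(x_k)$, information that your reduction to \cite[Th.~4.5]{jurisetal} does not surface.
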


\begin{proof}
We proceed by contraposition, so let us assume that there is $n<\omega$ such that $B_n\notin p$, and, essentially 
without 
loss of 
generality, let us also assume that $\{x\in H\big|\pi_n(x)\in(0,1/2)\}\in p$. Pick $j\in3$ such that $X_j\in p$, where
$$X_j=\left\{x\in H\bigg|\pi_n(x)\in\union_{m<\omega}\left[\frac{1}{2^{3m+j+2}},\frac{1}{2^{3m+j+1}}\right)\right\},$$
i.e., thinking of $\pi_n(x)$ as a number in $(0,1/2)$ written in binary notation, its first digits will be $0.0$ 
and then there will be an infinite string of zeroes and ones. Then $x\in X_j$ if and only if the first such nonzero 
digit 
appears in a position that is congruent with $j$ modulo $3$. Let $C\in p$, and let $\vec{x}$ be a sequence of 
elements of $H$ satisfying $p\ni\fs(\vec{x})\subseteq C\cap X_j$. Note that if $l\neq k$ and for some $m$, we have 
that $\frac{1}{2^{3m+j+2}}\leq \pi_n(x_l)<\frac{1}{2^{3m+j+1}}$ and 
$\frac{1}{2^{3m+j+2}}\leq \pi_n(x_k)<\frac{1}{2^{3m+j+1}}$, then 
$\frac{1}{2^{3m+j+1}}\leq \pi_n(x_l+x_k)<\frac{1}{2^{3m+j}}$ and so $x_l+x_k\notin X_j$, which is impossible. 
Thus there is at most one $\pi_n(x_k)$ in each interval $\left[\frac{1}{2^{3m+j+2}},\frac{1}{2^{3m+j+1}}\right)$ (the 
positions of the first nonzero digits of 
distinct $\pi_n(x_k)$ are distinct), so we may assume that the sequence $\vec{x}$ is arranged in such a way that 
$k<l$ 
implies $\pi_n(x_k)>\pi_n(x_l)$ ($\vec{x}$ is arranged in increasing order of its first nonzero digit in the $n$-th projection). Consequently, for each $k<\omega$ we have that $\pi_n(x_k)>4\pi_n(x_{k+1})$ and therefore 
$\pi_n(x_k)>3\sum_{l=k+1}^\infty\pi_n(x_l)$. This is easily 
seen to imply that for $a,b\in[\omega]^{<\omega}\setminus\{\varnothing\}$ and for 
$\varepsilon:a\longrightarrow\{1,2\},\delta:b\longrightarrow\{1,2\}$, if 
$\sum_{k\in a}\varepsilon(k)\pi_n(x_k)=\sum_{k\in b}\delta(k)\pi_n(x_k)$ then $a=b$ and $\varepsilon=\delta$. And of 
course this implies that for $a,b\in[\omega]^{<\omega}\setminus\{\varnothing\}$ and for 
$\varepsilon:a\longrightarrow\{1,2\},\delta:b\longrightarrow\{1,2\}$, if 
$\sum_{k\in a}\varepsilon(k)x_k=\sum_{k\in b}\delta(k)x_k$ then $a=b$ and $\varepsilon=\delta$. The latter statement 
in turn easily implies that the sequence $\vec{x}$ satisfies strong uniqueness of finite sums, hence, by 
\cite[Th. 3.2]{jurisetal}, $p$ must be sparse.
\end{proof}

The following lemma is stated in more generality than will actually be needed. Notice that we can recover 
Lemma~\ref{keylemma} as a particular case of it.

\begin{lemma}\label{finitecoordinate}
Let $p$ be a weakly summable ultrafilter on $H$, and assume that for some $n<\omega$ there is an $A\in p$ such that $\pi_n[A]$ is finite. Then $\{x\in G\big|\pi_n(x)=0\}\in p$.
\end{lemma}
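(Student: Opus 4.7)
The plan is to mimic the strategy of Lemma~\ref{keylemma} by reductio ad absurdum, using the finiteness of $\pi_n[A]$ together with the pigeonhole principle to reduce to a single nonzero value $c$ of $\pi_n$ on some set in $p$, and then extracting a contradiction from weak summability via $2c = c$. Specifically, I would assume for contradiction that $\{x \in H \mid \pi_n(x) = 0\} \notin p$. By the ultrafilter property, $\{x \in H \mid \pi_n(x) \neq 0\}$ belongs to $p$, and intersecting with $A$ produces a set $A' \in p$ with $\pi_n[A'] \subseteq \pi_n[A] \setminus \{0\}$ still finite.

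Since $\pi_n[A']$ is finite and nonempty, the decomposition $A' = \bigcup_{c \in \pi_n[A']} \{x \in A' \mid \pi_n(x) = c\}$ is a partition into finitely many pieces, so by pigeonhole there exists some (necessarily nonzero) $c \in \mathbb{T}[2^\infty]$ for which $A'' := \{x \in A' \mid \pi_n(x) = c\} \in p$. Now I would invoke weak summability to obtain a sequence $\vec{x} = \langle x_k \mid k < \omega\rangle$ of elements of $H$ with $\fs(\vec{x}) \subseteq A''$. Each singleton $\{k\}$ witnesses $x_k \in A''$, so $\pi_n(x_k) = c$ for every $k$; meanwhile $x_0 + x_1 \in \fs(\vec{x}) \subseteq A''$ forces $\pi_n(x_0 + x_1) = c$ as well. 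Since $\pi_n$ is a group homomorphism, however, $\pi_n(x_0 + x_1) = \pi_n(x_0) + \pi_n(x_1) = 2c$, so $2c = c$ in $\mathbb{T}$, which yields $c = 0$, contradicting the nonzeroness of $c$.

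I do not expect any serious obstacle: the argument is essentially a direct transcription of Lemma~\ref{keylemma} (in which the projection takes only the two values $0$ and $1$, so finiteness of the image is automatic and the pigeonhole step is trivial), with the slight novelty being that here we must first localize the value of $\pi_n$ to a single $c$ before the homomorphism identity $\pi_n(x_0+x_1) = 2c$ collides with $\pi_n(x_0+x_1) = c$. This makes clear the remark in the paper that Lemma~\ref{keylemma} is recovered as the special case $H = [\omega]^{<\omega}$.
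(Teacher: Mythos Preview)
Your proof is correct and follows essentially the same approach as the paper: pigeonhole down to a single value $c$ of $\pi_n$ on a set in $p$, then use weak summability to get $x_0,x_1,x_0+x_1$ all with $\pi_n$-value $c$, forcing $2c=c$ and hence $c=0$. The only cosmetic difference is that the paper argues directly (showing the selected value must be $0$) rather than wrapping the argument in a reductio.
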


\begin{proof}
Enumerate the finite set $\pi_n[A]=\{g_0,\ldots,g_{k-1}\}$ and choose $i<k$ such that 
$A_i=\{x\in A\big|\pi_n(x)=g_i\}\in p$. Since $p$ is weakly summable, we can pick a sequence $\vec{x}$ of elements 
of $G$ such that $\fs(\vec{x})\subseteq A_i$. But then, for example, $x_0,x_1,x_0+x_1\in A_i$, thus $g_i=\pi_n(x_0+x_1)=\pi_n(x_0)+\pi_n(x_1)=g_i+g_i$ and this implies that $g_i=0$.
\end{proof}

\begin{corollary}\label{seconcentraencero}
Let $p$ be a nonsparse strongly summable ultrafilter on $H$. Then for each $n<\omega$, $\{x\in H\big|\pi_n(x)=0\}\in p$.
\end{corollary}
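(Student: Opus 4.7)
The plan is to observe that this corollary follows almost immediately by chaining together Lemma~\ref{elunmedio} and Lemma~\ref{finitecoordinate}. Every strongly summable ultrafilter is in particular weakly summable, so both lemmas are available.

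First, I would invoke Lemma~\ref{elunmedio} to conclude that $B_n = \pi_n^{-1}[\mathbb{T}[2]] \in p$. The key observation is that the image $\pi_n[B_n]$ is contained in $\{0, 1/2\}$, hence finite (in fact, of size at most two). This is exactly the hypothesis of Lemma~\ref{finitecoordinate}, which then yields $\{x\in H \mid \pi_n(x)=0\}\in p$, as desired.

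There is no real obstacle here; the work has been front-loaded into the two preceding lemmas. The only thing to double-check is that we really are in the setting of both lemmas: $p$ is strongly summable, hence weakly summable (trivially, since the defining condition for strong summability is stronger), so Lemma~\ref{finitecoordinate} applies. And $p$ is nonsparse and strongly summable on $H$, which is precisely the hypothesis of Lemma~\ref{elunmedio}. Since $n<\omega$ was arbitrary, the conclusion holds for every $n$.
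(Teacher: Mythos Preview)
Your proof is correct and matches the paper's own argument, which simply says to put together Lemma~\ref{elunmedio} and Lemma~\ref{finitecoordinate}. You have spelled out precisely the intended chaining: $B_n\in p$ has finite $n$-th projection, so Lemma~\ref{finitecoordinate} applies.
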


\begin{proof}
Just put together Lemmas~\ref{elunmedio} and \ref{finitecoordinate}.
\end{proof}

In what follows we will use \cite[Th. 4.5]{jurisetal}, which says that if $p$ is a nonprincipal, strongly summable 
ultrafilter on a subsemigroup $S$ of $G=\bigoplus_{n<\omega}\mathbb T$, and if 
$$\{x\in S\setminus\{0\}\big|\pi_{\min(x)}(x)\neq1/2\}\in p$$
(where $\min(x)$ denotes the least $n$ such that $\pi_n(x)\neq0$), then there exists an $X\in p$ such that any 
sequence 
$\vec{x}$ with $\fs(\vec{x})\subseteq X$ satisfies strong uniqueness of finite sums (and in particular $p$ is 
sparse). Now assume that $p$ is a nonprincipal, nonsparse strongly summable ultrafilter on $H$. Notice that $p$ 
cannot contain the set $\{x\in G\big|(\forall n<\omega)(\pi_n(x)\in\{0,1/2\})\}=\bigoplus_{n<\omega}\mathbb T[2]$, 
because this set is a copy of 
$\bigoplus_{n<\omega}\mathbb Z_2$ and hence if $p$ contains it, that would induce a strongly summable ultrafilter 
$q$ on the Boolean group, which by Theorem~\ref{teorema} must be sparse and therefore $p$ will also be sparse. Hence 
$p$ must contain the set 
$C=\{x\in G\big|(\exists n<\omega)(\pi_n(x)\notin\{0,1/2\})\}$. For $x\in C$, let $\rho(x)$ denote the least $n$ 
such that $\pi_n(x)\notin\{0,1/2\}$.

\begin{lemma}\label{eluncuarto}
Let $p$ be a strongly summable ultrafilter on $H$. If
$$\{x\in C\big|\pi_{\rho(x)}(x)\notin\{1/4,3/4\}\}\in p$$
then $p$ is sparse.
\end{lemma}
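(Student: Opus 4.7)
The plan is to push $p$ forward through the doubling map $\varphi\colon H\longrightarrow H$, $\varphi(x)=2x$, and invoke \cite[Th. 4.5]{jurisetal} on the resulting ultrafilter. The guiding identity is that for every $x\in C$, $\min(2x)=\rho(x)$ and $\pi_{\min(2x)}(2x)=2\pi_{\rho(x)}(x)$; the hypothesis $\pi_{\rho(x)}(x)\notin\{1/4,3/4\}$ (together with $\pi_{\rho(x)}(x)\notin\{0,1/2\}$, which holds automatically by definition of $\rho$) therefore translates, after setting $y=2x$, into the condition $\pi_{\min(y)}(y)\neq 1/2$ on a large set.

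First, I would set up $q=\varphi_{\ast}p$ (the pushforward: $B\in q$ iff $\varphi^{-1}(B)\in p$) and check that it is a nonprincipal strongly summable ultrafilter on $H$. Strong summability follows from the identity $\fs(2\vec{x})=\varphi(\fs(\vec{x}))$, so that any FS-set in $p$ maps to an FS-set in $q$. Nonprincipality uses the fact that any coset $\varphi^{-1}(\{y\})$ is a translate of $\ker\varphi=\bigoplus_{n<\omega}\mathbb T[2]$, and a strongly summable ultrafilter concentrated there forces $x_1+x_2\in\varphi^{-1}(\{y\})$ for two of its witnesses, hence $y=2y=0$; but $C\in p$ rules out $\varphi^{-1}(\{0\})=\bigoplus_{n<\omega}\mathbb T[2]\in p$. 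With the translated hypothesis $\{y\in H\setminus\{0\}:\pi_{\min(y)}(y)\neq 1/2\}\in q$, \cite[Th. 4.5]{jurisetal} applied to $q$ yields some $Y\in q$ such that every sequence $\vec{z}$ with $\fs(\vec{z})\subseteq Y$ satisfies strong uniqueness of finite sums.

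The last step is to pull $Y$ back to $X=\varphi^{-1}(Y)\in p$ and transfer strong uniqueness. For any sequence $\vec{x}$ with $\fs(\vec{x})\subseteq X$, the doubled sequence $\vec{z}=(2x_k)_{k<\omega}$ has $\fs(\vec{z})\subseteq Y$ and hence satisfies strong uniqueness; because $\varphi$ is a homomorphism, any violation of strong uniqueness for $\vec{x}$---a coincidence $\sum_{k\in a}x_k=\sum_{k\in b}x_k$ with $a\neq b$, or an overlap $\sum_{k\in a}x_k+\sum_{k\in b}x_k\in\fs(\vec{x})$ with $a\cap b\neq\varnothing$---would transport through $\varphi$ to the corresponding violation for $\vec{z}$, a contradiction. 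Thus every sequence whose FS-set sits inside $X$ satisfies strong uniqueness, and combining this with strong summability of $p$ shows $p$ has a base of FS-sets coming from sequences with strong uniqueness; \cite[Th. 3.2]{jurisetal} then delivers sparseness. The step requiring the most care is the nonprincipality of $q$, since it is there that the structure of the doubling map and the fact that $C\in p$ must interact to rule out every coset of $\ker\varphi$ as a concentration set for $p$; the remaining verifications are mechanical once the doubling map is identified as the right reduction.
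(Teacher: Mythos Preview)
Your proposal is correct and follows essentially the same route as the paper: push $p$ forward through the doubling map $\varphi(x)=2x$, check that $\varphi(p)$ is nonprincipal strongly summable and contains $\{y\neq 0:\pi_{\min(y)}(y)\neq 1/2\}$, apply \cite[Th.~4.5]{jurisetal} to obtain a strong-uniqueness witness $Y\in\varphi(p)$, pull back, and finish with \cite[Th.~3.2]{jurisetal}. The only cosmetic differences are that the paper cites \cite[Lemma~4.4]{jurisetal} for strong summability of $\varphi(p)$ where you argue it directly, and that your justification of nonprincipality (via the coset argument forcing $y=2y=0$) is more explicit than the paper's one-line assertion.
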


\begin{proof}
Consider the morphism $\varphi:H\longrightarrow H$ given by $\varphi(x)=2x$, whose kernel is exactly 
$\bigoplus_{n<\omega}\mathbb T[2]$. Since the latter is not an element of $p$, then 
$\varphi(p)=(\beta\varphi)(p)$ (i.e. the image of $p$ under the continuous extension of 
$\varphi:H\longrightarrow\beta H$ to $\beta H$, which is given by $\{A\subseteq H\big|\varphi^{-1}[A]\in p\}$) is 
a nonprincipal ultrafilter. Moreover, since $p$ is strongly summable, by \cite[Lemma 4.4]{jurisetal}, so is 
$\varphi(p)$. Now notice that for $x\in H\setminus\ker(\varphi)=C$, we have $\rho(x)=\min(\varphi(x))$. Thus 
$\varphi(p)$ contains the set $\{x\in H\setminus\{0\}\big|\pi_{\min(x)}(x)\neq1/2\}$, since its preimage under 
$\varphi$ is exactly $\{x\in C\big|\pi_{\rho(x)}(x)\notin\{1/4,3/4\}\}$. Therefore by \cite[Th. 4.5]{jurisetal}, 
there is a 
set $X\in\varphi(p)$ such that whenever $\fs(\vec{y})\subseteq X$, $y$ must satisfy strong uniqueness of finite sums. 
Now for $A\in p$, we can pick a sequence $\vec{x}$ such that $p\ni\fs(\vec{x})\subseteq A\cap\varphi^{-1}[X]$. 
Then if we let $\vec{y}$ be the sequence given by $y_n=\varphi(x_n)$, we get that 
$\fs(\vec{y})=\varphi[\fs(\vec{x})]\subseteq X$, thus $\vec{y}$ satisfies strong uniqueness of finite sums. 
It is not hard to see that this implies that $\vec{x}$ satisfies strong uniqueness of finite sums as well, 
thus $p$ has a basis of sets of the form $\fs(\vec{x})$ for sequences $\vec{x}$ satisfying strong uniqueness 
of finite sums. Therefore by \cite[Th. 3.2]{jurisetal}, $p$ is sparse.
\end{proof}

Now we are ready to state the main result of this section.

\begin{theorem}\label{sucesiondezetasdosenes}
Assume that there exists a nonsparse strongly summable ultrafilter $p$ on $H$. Then there exists a sequence 
$\vec{n}=\langle n_i\big|i<\omega\rangle$ of natural numbers such that 
$\bigoplus_{n<\omega}\mathbb T[2^{n_i}]\in p$. In particular, if there exists a (nonprincipal) nonsparse strongly 
summable ultrafilter on some abelian cancellative semigroup, then there exists one on 
$\bigoplus_{n<\omega}\mathbb Z_{2^{n_i}}$, for some sequence $\vec{n}$.
\end{theorem}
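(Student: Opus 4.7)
The plan is to reduce the theorem to producing a single sequence $\vec{x}=\langle x_k\big|k<\omega\rangle$ in $H$ with $\fs(\vec{x})\in p$ and with each $x_k$ lying in the subgroup
$$B_k=\bigcap_{i\leq k}\{x\in H\big|\pi_i(x)=0\}\in p$$
(membership in $p$ being Corollary~\ref{seconcentraencero}). Once such a sequence is at hand, for any $y=\sum_{k\in a}x_k\in\fs(\vec{x})$ and any $i<\omega$ we have $\pi_i(y)=\sum_{k\in a,\,k<i}\pi_i(x_k)$, because $\pi_i(x_k)=0$ whenever $k\geq i$ (since then $x_k\in B_k\subseteq B_i$). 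Letting $n_i$ be the largest exponent of $2$ appearing as a denominator among $\pi_i(x_0),\ldots,\pi_i(x_{i-1})$ (and $n_0=0$), this finite sum lies in $\mathbb{T}[2^{n_i}]$, so $\fs(\vec{x})\subseteq\bigoplus_{i<\omega}\mathbb{T}[2^{n_i}]$, and that direct sum is an element of $p$.

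To build $\vec{x}$, I iterate strong summability against the chain $B_0\supseteq B_1\supseteq\cdots$ and then run a Galvin--Glazer recursion anchored on the idempotency of $p$ (which holds by \cite[Th.~2.3]{protasov}). Recursively pick $\vec{y}^{(k)}$ with $\fs(\vec{y}^{(k)})\in p$ and $\fs(\vec{y}^{(k)})\subseteq\fs(\vec{y}^{(k-1)})\cap B_k$, producing a nested family of $\fs$-sets, all in $p$, with the $k$-th one contained in $B_k$. Then set $C_0=\fs(\vec{y}^{(0)})$, and at stage $k$ choose $x_k$ from $\fs(\vec{y}^{(k)})\cap C_k^\ast$, where $C_k^\ast=\{x\in C_k\big|-x+C_k\in p\}\in p$ by idempotency, updating $C_{k+1}=C_k\cap(-x_k+C_k)\cap\fs(\vec{y}^{(k+1)})\in p$ after each choice. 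This keeps $x_k\in B_k$ throughout, and the standard argument shows $\fs(\vec{x})\subseteq C_0$; the delicate point is that drawing each $x_k$ from an $\fs$-set that itself belongs to $p$ is what forces $\fs(\vec{x})\in p$ as well.

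The ``in particular'' clause follows from the reduction set out at the start of this section: any nonprincipal nonsparse strongly summable ultrafilter on an abelian cancellative semigroup transfers to such an ultrafilter on $G$, and thence by \cite[Cor.~4.6]{jurisetal} to one on $H$; once we know $\bigoplus_{i<\omega}\mathbb{T}[2^{n_i}]\in p$, the restriction of $p$ to that countable subgroup (isomorphic to $\bigoplus_{i<\omega}\mathbb{Z}_{2^{n_i}}$) yields a nonprincipal nonsparse strongly summable ultrafilter there. The main technical hurdle is the simultaneous achievement of $\fs(\vec{x})\in p$ together with the countably many conditions $x_k\in B_k$; strong summability handles each member of $p$ in isolation, so one needs the diagonal/idempotent construction sketched above to thread both demands together.
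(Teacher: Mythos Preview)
Your reduction at the start is sound: a sequence $\vec{x}$ with $\fs(\vec{x})\in p$ and $x_k\in B_k$ for each $k$ would indeed yield the conclusion at once, and the ``in particular'' clause is handled correctly.

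The gap lies in the construction of $\vec{x}$. The Galvin--Glazer recursion you describe (choosing $x_k\in C_k^\ast$ and updating $C_{k+1}=C_k\cap(-x_k+C_k)\cap\fs(\vec{y}^{(k+1)})$) guarantees only that $\fs(\vec{x})\subseteq C_0$; it does \emph{not} give $\fs(\vec{x})\in p$. Your claim that ``drawing each $x_k$ from an $\fs$-set that itself belongs to $p$ is what forces $\fs(\vec{x})\in p$'' is unjustified: the $x_k$ are individual points, and the fact that each one happens to lie in some $\fs$-set from $p$ places no constraint on whether the new set $\fs(\vec{x})$ belongs to $p$. (Concretely: take all $\vec{y}^{(k)}$ equal to a fixed $\vec{y}$ with $\fs(\vec{y})\in p$ and pick $x_k=y_{2k}$; nothing in your argument forces $\fs(\langle y_{2k}\rangle_k)\in p$.) What your scheme actually requires is a diagonal-intersection property for the base of $\fs$-sets: given nested $\fs(\vec{y}^{(0)})\supseteq\fs(\vec{y}^{(1)})\supseteq\cdots$ in $p$, one can find a single $\fs(\vec{x})\in p$ whose tails are eventually inside each $\fs(\vec{y}^{(k)})$. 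For strongly summable ultrafilters on $\omega$ such a statement is available (essentially \cite[Lemma~1C]{blasshindman}, via the correspondence with union ultrafilters), but for $H$ you would have to prove it, and you do not.

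The paper sidesteps this difficulty by a different mechanism. Rather than threading countably many conditions into a diagonal sequence, it invokes Lemma~\ref{eluncuarto} to obtain a \emph{single} set $C_1=\{x\in C\mid\pi_{\rho(x)}(x)=1/4\}\in p$, applies strong summability \emph{once} to get $\vec{x}$ with $p\ni\fs(\vec{x})\subseteq C_1$, and then shows purely combinatorially (Claims~\ref{alomas2}, \ref{iszeroonpreviousrho}, \ref{finitesetofgenerators}) that the constraint $\fs(\vec{x})\subseteq C_1$ already forces $\{\pi_i(x_n)\mid n<\omega\}$ to be finite for every $i$. The finiteness of the coordinate projections is thus extracted from the structure of one carefully chosen element of $p$, not engineered into the sequence term by term; this is why the paper needs Lemma~\ref{eluncuarto} and not merely Corollary~\ref{seconcentraencero}.
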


\begin{proof}
Let $p$ be a nonprincipal, nonsparse strongly summable ultrafilter on $H$. As was pointed out above, $p$ cannot 
contain the set $\{x\in G\big|(\forall n<\omega)(\pi_n(x)\in\{0,1/2\})\}=\bigoplus_{n<\omega}\mathbb T[2]$, hence 
$C=\{x\in G\big|(\exists n<\omega)(\pi_n(x)\notin\{0,1/2\})\}\in p$. Moreover by Lemma~\ref{eluncuarto}, $C_0=\{x\in C\big|\pi_{\rho(x)}(x)\in\{1/4,3/4\}\}\in p$. Now 
$C_0=C_1\cup C_3$, where $C_i=\{x\in C_0\big|\pi_{\rho(x)}(x)=i/4\}$. Essentially without loss of generality, we 
can assume that $C_1\in p$. Now choose a sequence $\vec{x}$ with $p\ni\fs(\vec{x})\subseteq C_1$, and for $i<\omega$ 
let $M_i=\{n<\omega\big|\rho(x_n)=i\}$ (so $n\in M_i$ implies $\pi_i(x_n)=1/4$).

\begin{claim}\label{alomas2}
For each $i<\omega$, $|M_i|\leq2$.
\end{claim}

\begin{proof}[Proof of Claim]
Assume, by way of contradiction, that there are pairwise distinct $n,m,k\in M_i$. Let $x=x_n+x_m+x_k$. For $j<i$, we 
have that $\pi_j(x)\in\{0,1/2\}$, because $\pi_j(x_n),\pi_j(x_m),\pi_j(x_k)\in\{0,1/2\}$. On the other hand, 
$\pi_i(x_n)=\pi_i(x_m)=\pi_i(x_k)=1/4$ thus $\pi_i(x)=3/4$, so $\rho(x)=i$ and $x\in C_3$, which is a contradiction.
\end{proof}

Thus, by rearranging the sequence if necessary, we may assume that $i<j$ and $n\in M_i,m\in M_j$ implies that 
$n<m$. Equivalently, $n<m$ implies that $\rho(x_n)\leq\rho(x_m)$, where the inequality is strict if $m>n+1$.

\begin{claim}\label{iszeroonpreviousrho}
Let $n<m<\omega$ and assume that $i=\rho(x_n)<\rho(x_m)$ (which may or may not hold if $m=n+1$, but must hold if 
$m>n+1$). Then $\pi_i(x_m)=0$.
\end{claim}

\begin{proof}[Proof of Claim]
Let $x=x_n+x_m$. For $j<i$, we since $\pi_j(x_n),\pi_j(x_m)\in\{0,1/2\}$ we have that $\pi_j(x)\in\{0,1/2\}$. On the 
other hand, $\pi_i(x_n)=1/4$ while $\pi_i(x_m)\in\{0,1/2\}$, so $\pi_i(x)\in\{1/4,3/4\}$. Hence $\rho(x)=i$, now 
since $x\in C_1$, whe must have $\pi_i(x)=1/4$, which can only happen if $\pi_i(x_m)=0$.
\end{proof}

\begin{claim}\label{finitesetofgenerators}
For every $i<\omega$, the set $\{\pi_i(x_n)\big|n<\omega\}$ is finite.
\end{claim}

\begin{proof}[Proof of Claim]
Let $i<\omega$.  We have two cases according to whether $M_i$ is nonempty or not.

If $M_i\neq\varnothing$, then by Claim~\ref{alomas2}, we know that $|M_i|\leq2$. Thus we can let $k=\min(M_i)$ and 
$k'=\max(M_i)$ (so that $k'$ equals either $k$ or $k+1$, $M_i=\{k,k'\}$, and $\pi_i(x_k)=\pi_i(x_{k'})=1/4$). 
Now Claim~\ref{iszeroonpreviousrho} yields $\pi_i(x_n)=0$ for $n>k'$, therefore
\begin{eqnarray*}
\{\pi_i(x_n)\big|n<\omega\} & = & \{\pi_i(x_n)\big|n<k\}\cup\{\pi_i(x_n)\big|n\in\{k,k'\}\}\cup\{\pi_i(x_n)\big|n>k\} \\
 & = & \{\pi_i(x_n)\big|n<k\}\cup\{1/4\}\cup\{0\}
\end{eqnarray*}
which is finite.

Now if $M_i=\varnothing$, then Claim~\ref{alomas2} guarantees that there are only finitely many 
integers $l$ such that $\rho(x_l)<i$, so let $k$ be the greatest such integer if some exists, or $k=0$ otherwise 
(equivalently $k=\max(M_j)$ where $j$ is the greatest integer less that $i$ for which $M_j\neq\varnothing$, if such 
a $j$ exists, or $k=0$ otherwise). Thus we know that for $n>k$, $\pi_i(x_n)\in\{0,1/2\}$. Therefore 
$$\{\pi_i(x_n)\big|n<\omega\}=\{\pi_i(x_n)\big|n\leq k\}\cup\{\pi_i(x_n)\big|n>k\}\subseteq\{\pi_i(x_n)\big|n\leq k\}\cup\{0,1/2\}$$
which is finite as well.
\end{proof}

Notice that, if $F$ is a finite subset of $\mathbb T[2^\infty]$, then the subgroup of the latter generated by the 
former must be $\mathbb T[2^n]$ for suitable $n$. Namely, if $F=\{\frac{n_0}{2^{m_0}},\ldots,\frac{n_k}{2^{m_k}}\}$, 
where $2\nmid n_i$ for $i\leq k$, and $m=\max\{m_0,\ldots,m_k\}$, then $F$ generates the subgroup $\mathbb T[2^m]$.
Hence by Claim~\ref{finitesetofgenerators}, for each $i<\omega$ we can choose $n_i\in\mathbb N$ such that the 
subgroup of $\mathbb T[2^\infty]$ generated by $\{\pi_i(x_n)\big|n<\omega\}$ is $\mathbb T[2^{n_i}]$. In this way 
we construct the sequence $\vec{n}=\langle n_i\big|i<\omega\rangle$ of natural numbers which satisfies that 
$p\ni\fs(x)\subseteq\bigoplus_{n<\omega}\mathbb T[2^{n_i}]$.
\end{proof}

Recall that $\mathbb T[2^n]\cong\mathbb Z_{2^n}$; and that, if $n<m$, then $\mathbb T[2^n]\subseteq\mathbb T[2^m]$. 
From this, it is not hard to see that the sequence $\vec{n}$ from the previous theorem has to be unbounded. For if 
that sequence was bounded, say by $n$, then we would have that 
$\bigoplus_{i<\omega}\mathbb T[2^{n_i}]\subseteq\bigoplus_{i<\omega}\mathbb T[2^n]$. Thus if $p$ is the ultrafilter 
yielding $\vec{n}$, $p$ would contain $\bigoplus_{i<\omega}\mathbb T[2^n]$, thus inducing a (nonprincipal) nonsparse 
strongly summable ultrafilter $q$ on its isomorphic copy $\bigoplus_{i<\omega}\mathbb Z_{2^n}$. But 
this cannot happen, more generally, for every $n\geq2$ every strongly summable ultrafilter $q$ on 
$G(n)=\bigoplus_{i<\omega}\mathbb Z_n$ is sparse. The case when $n=2$ is just Theorem~\ref{teorema}, and for 
$n\geq3$, pick $0<k<n$ such that $A_k\in p$, where $A_k$ is the set consisting of those $x\in G(n)$ whose first 
nonzero coordinate equals $k$. It is then easy to see that, for $A\in p$, if $\vec{x}$ is a sequence of 
elements of $G(n)$ such that $\fs(\vec{x})\subseteq A\cap A_k$, then for distinct $i,j$ the indices of the first 
nonzero coordinates of $x_i$ and $x_j$ must be different (otherwise the first nonzero coordinate of $x_i+x_j$ would 
be $2k\neq k$, so we would have $x_i+x_j\notin A_k$, which is absurd). This in turn implies that the sequence 
$\vec{x}$ satisfies the strong uniqueness of finite sums, and thus by \cite[Th. 3.2]{jurisetal} the desired 
conclusion follows.

Therefore, since every sequence $\vec{n}$ given by the theorem must be unbounded, one might be tempted to think that 
every such sequence should tend to infinity very quickly, but this is really not the case, as the following 
corollary shows. I am thankful to Andreas Blass for pointing this out to me.

\begin{corollary}
If $p$ is a nonprincipal nonsparse strongly summable ultrafilter on $H$, then there is an injective homomorphism 
$\varphi:H\longrightarrow H$ sending $p$ to an ultrafilter $q$ (which must necessarily be also nonprincipal 
nonsparse strongly summable) containing the set $\bigoplus_{n<\omega}\mathbb T[2^{n}]$. In particular, if there is a 
nonprincipal nonsparse strongly summable ultrafilter on some abelian cancellative semigroup, then there is one on 
$\bigoplus_{n<\omega}\mathbb Z_{2^n}$.
\end{corollary}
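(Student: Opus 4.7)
The plan is to feed the data of Theorem~\ref{sucesiondezetasdosenes} into a coordinate-rearrangement map and then check that the five adjectives (\emph{injective homomorphism}, and \emph{nonprincipal, nonsparse, strongly summable} for the image) all survive.

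First, I would apply Theorem~\ref{sucesiondezetasdosenes} to the given $p$ in order to obtain a sequence $\vec{n}=\langle n_i\mid i<\omega\rangle$ of natural numbers with $\bigoplus_{i<\omega}\mathbb T[2^{n_i}]\in p$. Next I would build, by a straightforward recursion on $i$, an injective function $j:\omega\longrightarrow\omega$ satisfying $j(i)\geq n_i$ for every $i<\omega$: at stage $i$ we simply pick $j(i)$ in the cofinite set $\{n_i,n_i+1,n_i+2,\ldots\}\setminus\{j(0),\ldots,j(i-1)\}$. (Unboundedness of $\vec{n}$ is not needed for this recursion, although it is what makes the corollary nontrivial, as highlighted in the paragraph just before the statement.)

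Using $j$, I would define $\varphi:H\longrightarrow H$ by declaring that $\varphi(x)$ is the element whose $j(i)$-th coordinate equals $\pi_i(x)$ for each $i<\omega$, and whose $n$-th coordinate vanishes for $n\notin j[\omega]$. Because $j$ is injective, $\varphi$ is a well-defined injective group homomorphism. Moreover, whenever $x\in\bigoplus_{i<\omega}\mathbb T[2^{n_i}]$, the inclusion $\mathbb T[2^{n_i}]\subseteq\mathbb T[2^{j(i)}]$ (guaranteed by $n_i\leq j(i)$) forces $\pi_{j(i)}(\varphi(x))\in\mathbb T[2^{j(i)}]$ and all other coordinates of $\varphi(x)$ to be zero, hence $\varphi(x)\in\bigoplus_{n<\omega}\mathbb T[2^n]$. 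Setting $q=\varphi(p)$, this shows $\bigoplus_{n<\omega}\mathbb T[2^n]\in q$.

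It remains to transfer the three adjectives from $p$ to $q$. Nonprincipality is immediate from the injectivity of $\varphi$ (preimages of singletons are singletons). For strong summability, I would quote \cite[Lemma 4.4]{jurisetal} exactly as done in the proof of Lemma~\ref{eluncuarto}: the kernel of $\varphi$ is $\{0\}\notin p$. The last -- and I expect slightly most delicate -- step is to see that $q$ is nonsparse, which I would prove by contrapositive: if $q$ were sparse and $A\in p$, then $\varphi[A]\in q$, so one could find sequences $\vec{y},\vec{y}^{\,\prime}$ witnessing sparseness of $q$ inside $\varphi[A]$; injectivity of $\varphi$ lets us lift them to sequences $\vec{x},\vec{x}^{\,\prime}$ in $H$ via $y_n=\varphi(x_n)$, and because $\varphi$ is a homomorphism we have $\fs(\vec{x})\subseteq A$, $\fs(\vec{x}^{\,\prime})\in p$, and the injectivity of $\varphi$ preserves the ``infinite difference of ranges'' condition, contradicting nonsparseness of $p$.

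The ``In particular'' clause is then automatic: the reduction at the start of Section~3 funnels any nonsparse strongly summable ultrafilter on an abelian cancellative semigroup down to one on $H$; the construction above promotes it to one concentrated on $\bigoplus_{n<\omega}\mathbb T[2^n]\cong\bigoplus_{n<\omega}\mathbb Z_{2^n}$; and restricting to this subsemigroup, via the remarks from the beginning of Section~3, preserves nonprincipality, nonsparseness, and strong summability.
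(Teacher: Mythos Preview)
Your proof is correct and follows essentially the same approach as the paper: both invoke Theorem~\ref{sucesiondezetasdosenes} and then re-index coordinates via an injection $j:\omega\to\omega$ (the paper takes $j$ strictly increasing with $j(i)>n_i$, you allow any injective $j$ with $j(i)\geq n_i$) so as to carry $\bigoplus_{i<\omega}\mathbb T[2^{n_i}]$ into $\bigoplus_{n<\omega}\mathbb T[2^n]$. The only real difference is that you spell out in detail why $\varphi(p)$ inherits nonprincipality, strong summability, and nonsparseness, whereas the paper absorbs this into the parenthetical remark in the statement.
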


\begin{proof}
Given the sequence $\vec{n}$ from Theorem~\ref{sucesiondezetasdosenes}, create a new strictly increasing sequence 
$\vec{m}=\langle m_i\big|i<\omega\rangle$ by letting $m_0$ be the least $k$ such that $n_0<k$, and recursively 
letting $m_{i+1}$ be the least $k$ such that $\max\{n_{i+1},m_i\}<k$. Then we can define the embedding 
$\varphi:H\longrightarrow H$ by letting $\varphi(x)$ be the element of $H$ whose $m_i$-th coordinate is exactly the 
$i$-th coordinate of $x$ and whose $k$-th coordinate is zero whenever $k\notin\{m_i\big|i<\omega\}$. Clearly 
$\varphi$ is injective. Now notice that for 
$x\in\bigoplus_{i<\omega}\mathbb T[2^{n_i}]$, since by construction $n_i<m_i$, we have for every $i<\omega$ that
$\pi_{m_i}(\varphi(x))=\pi_i(x)\in\mathbb T[2^{n_i}]\subseteq\mathbb T[2^{m_i}]$, and of course for 
$k\notin\{m_i\big|i<\omega\}$ we have that $\pi_k(\varphi(x))=0\in\mathbb T[2^k]$. Thus 
$\varphi(x)\in\bigoplus_{n<\omega}\mathbb T[2^n]$, hence 
$\varphi\left[\bigoplus_{i<\omega}\mathbb T[2^{n_i}]\right]\subseteq\bigoplus_{n<\omega}\mathbb T[2^n]$ and so the latter 
is an element of $\varphi(p)$, and thus the result follows.
\end{proof}


\begin{thebibliography}{9}

\bibitem{blasshindman} {\scshape Blass, A.; Hindman, N.} On Strongly Summable Ultrafilters and Union Ultrafilters, 
{\em Trans. Amer. Math. Soc.} {\bf 304} No. 1 (1987), 83-99. MR0906807 (88i:03080), Zbl 0643.03032

\bibitem{fuchs} {\scshape Fuchs, L.} Infinite Abelian Groups, vol. I. Pure and Applied Mathematics 36. {\em Academic 
Press, New York-San Francisco-London}, 1970. MR0255673 (41 \#333), Zbl 0209.05503.

\bibitem{hindman} {\scshape Hindman, N.} The existence of certain ultrafilters on $\mathbb N$ and a conjecture of 
Graham and Rothschild, {\em Proc. Amer. Math. Soc.} \textbf{36} (1972), 341-346. MR0307926 (46 \#7041), 
Zbl 0259.10046.

\bibitem{protasov} {\scshape Hindman, N.; Protasov, I.; Strauss, D.} Strongly Summable Ultrafilters on Abelian 
Groups, {\em Matem. Studii} \textbf{10} (1998), 121-132. MR1687143 (2001d:22003), Zbl 0934.22005.

\bibitem{jurisetal} {\scshape Hindman, N.; Stepr\=ans, J.; Strauss, D.} Semigroups in which all Strongly Summable 
Ultrafilters are Sparse, {\em New York J. Math.} {\bf 18} (2012), 835-848. Zbl pre06098874.

\bibitem{hindmanstrauss} {\scshape Hindman, N.; Strauss, D.} Algebra in the Stone-\v Cech Compactification. 
de Gruyter Expositions in Mathematics 27. {\em Walter de Gruyter, Berlin-New York}, 1998. 
MR1642231 (99j:54001), Zbl 0918.22001.

\bibitem{krautzberger} {\scshape Krautzberger, P.} On strongly summable ultrafilters, {\em New York J. Math.} 
{\bf 16} (2010), 629-649. MR2740593 (2012k:03135), Zbl 1234.03034.

\bibitem{robinson} {\scshape Robinson, D. J. S.} A Course in the Theory of Groups. Graduate Texts in Mathematics 80. 
{\em Springer-Verlag, New York-Heidelberg-Berlin}, 1982. MR0648604 (84k:20001), Zbl 0483.20001.

\bibitem{rotman} {\scshape Rotman, J.} The Theory of Groups. {\em Allyn and Bacon, Boston-London-Sydney-Toronto}, 
1973.  MR0442063 (56 \#451)/MR0690593 (50 \#2315), Zbl 0262.20001.

\end{thebibliography}
\end{document}